\newtheorem{defin}{Definition}
\newtheorem{thm}[defin]{Theorem}
\newtheorem{lemma}[defin]{Lemma}
\newtheorem{cor}[defin]{Corollary}
\newtheorem{oq}[defin]{Open question}
\newcommand{\IOpen}{\mathrm{IOpen}}
\newcommand{\modm}{\mathcal{M}}
\newcommand{\IP}{\subseteq^{IP}}
\newcommand{\RC}[1]{\text{R}(#1)}
\newcommand{\FF}[1]{\text{F}(#1)}
\newcommand{\Pres}{\mathrm{Pr}}
\newcommand{\IOpenLin}{\mathrm{IOpenLin}}
\newcommand{\Ilx}{I^l_x}
\newcommand{\eIOpen}{\mathrm{eIOpen}}
\newcommand{\str}[1]{\StrLen{#1}[\tmplength]\ifthenelse{1=\tmplength{}}{\mathcal{#1}}{\underline{#1}}}
\newcommand{\M}{\mathcal{M}}
\newcommand{\stru}[2]{\lara{#1,#2}} % struktura
\newcommand{\lang}[1]{\lara{#1}} % signatura jazyka
\newcommand{\thn}[1]{\mathrm{#1}} %theory name
\newcommand{\PA}{\thn{PA}}
\newcommand{\PrA}{\thn{Pr}}
\newcommand{\DeLOs}[2]{{\tiny\ifthenelse{\equal{#1}{#2}}{\thn{DeLO}^{#1}}{\ifthenelse{\equal{#2}{}}{\thn{DeLO}^{#1}}{\thn{DeL0}^{\vpair{$#1$}{$#2$}}}}}}
\newcommand{\lekv}{\leftrightarrow}
\newcommand{\limp}{\rightarrow}
\renewcommand{\land}{\,\&\,}
\newcommand{\N}{\mathbb{N}}
\newcommand{\lr}[1]{\{#1\}}
\newcommand{\lara}[1]{\langle #1\rangle}
\newcommand{\set}[2]{\lr{#1;#2}}
\newcommand{\vect}[1]{\overline{#1}}
\newcommand{\uvz}[1]{``#1''} % anglicke uvozovky
\newcommand{\ipi}[1]{\mathrm{IPR}(#1)}
\begin{document}

\title{Shepherdson's theorems for fragments of open induction}
%Algebraic equivalents to fragments of open induction

\abstract{By a well-known result of Shepherdson, models of the theory $\mathrm{IOpen}$ (a first order arithmetic containing the scheme of induction for all quantifier free formulas) are exactly all the discretely ordered semirings that are integer parts of their real closures. In this paper we prove several analogous results that provide algebraic equivalents to various fragments of $\mathrm{IOpen}$.
%equivalence between algebraic properties of a discretely ordered semiring $\M$ and 
}

\keywords{weak arithmetics, open induction, linear induction, real closed fields}

\subjclass[2010]{Primary 03C62, 03F30; Secondary 06F25}
   
\author{Jana Glivick\' a}
\address{Jana Glivick\' a: Department of Theoretical Computer Science and Mathematical Logic, Faculty of Mathematics and Physics, Charles University, Malostranské náměstí~25, 118~00 Praha~1, Czech Republic}
          
\author{Petr Glivick\' y}
\address{Petr Glivick\' y: Department of Mathematics, Faculty of Informatics and Statistics, University of Economics, Prague, Ekonomická~957, 148~00 Praha~4, Czech Republic}
}

\thanks{This paper was processed with contribution of long term institutional support of research activities by Faculty of Informatics and Statistics, University of Economics, Prague. The study was supported by the Charles University, project GA UK No.~270815. The work was supported by the grant {SVV-2016-260336}.}

\maketitle

%There are (at least) two different ways how to present models of arithmetical theories in a unifying environment.
%\begin{itemize}
	%\item We can start with the \textit{background model} and expand it. As a prominent example, take the saturated model of Presburger arithmetic. It can carry all possible (up to elementary equivalence) Peano products as its expansion.
	%\item Or, we can look for a structure that would contain models of arithmetical theories as its substructures. This can be done by starting with a real closed field and considering its integer parts.  
%\end{itemize}
%
%To ease our lives when dealing with saturated models we work under the assumption of the continuum hypothesis.\\

\section{Introduction}

In \cite{Shep} Shepherdson proved that there is a recursive nonstandard model of the open induction arithmetic $\mathrm{IOpen}$ (in contrast to Peano arithmetic ($\PA$), where no such model exists by the Tennenbaum's theorem \cite{Ten59}). Shepherdson's model is constructed as an integer part of certain real closed field (see the Preliminaries section for the precise definitions). Implicitly, even more is proved: a discretely ordered semiring $\M$ is a model of $\mathrm{IOpen}$ if and only if $\M$ is an integer part of the real closure $\RC{\M}$ of $\M$.

In this paper we prove several analogous results -- versions of the Shepherdson's theorem -- for other fragments of $\PA$ in place of $\mathrm{IOpen}$ that correspond to various algebraic properties of (extensions of) their models.

\section{Preliminaries}

\subsection{Discretely ordered rings and their extensions}
A discretely ordered ring is a structure $\mathcal{R}=\stru{R}{0,1,+,-,\cdot,\leq}$ such that $\stru{R}{0,1,+,\allowbreak -,\cdot}$ is a commutative ring, $\leq$ is a linear ordering on $R$ such that $1$ is the least positive element, and $\leq$ respects $+$ and $\cdot$ in the following way:% 
$$a\leq b \limp a + c \leq b + c,\ \ \ \ \ \ 0 \leq a, b \limp 0 \leq a\cdot b,$$
for all $a,b,c\in R$. 

A nonnegative part of a discretely ordered ring in the language without $-$ is called a discretely ordered semiring. We denote the semiring corresponding to the ring $\M$ by $\M^+$.

Further on, $\modm$ always denotes a discretely ordered ring. 
(Such an $\M$ necessarily contains negative elements and therefore, strictly speaking, can not be a model of arithmetical theory $T$. If we say that $\M$ is a model of $T$, which we denote by $\M\models T$, we mean that $\M^+$ is.)

We define $\FF{\modm}$ as the fraction field of $\modm$ and $\RC{\modm}$ as the unique, up to isomorphism (by the Artin-Schreier theorem \cite[Theorem B.14]{Markerbook}), ordered real closure of $\FF{\modm}$ that preserves the ordering of $\FF{\M}$. By $\frac{\M}{\N}$ we denote the ordered ring of all formal fractions of the form $m/n$, where $m\in M$ and $0\neq n\in\N$.

Let $\mathcal{R}$ be a ring. A discretely ordered subring $I$ of $\mathcal{R}$ is called an integer part of $\mathcal{R}$ (denoted by $I \IP \mathcal{R}$) if for every $r \in R$, there is $i \in I$ such that $r-1 < i \leq r$. 

We call $m\in M$ the integer part of $r\in\RC{\M}$ if $r-1 < m \leq r$. (Note that every $r\in\RC{\M}$ has an integer part $m\in M$ if and only if $\M\IP\RC{\M}$.)

We say that an ordered ring $\mathcal{R}'$ is a dense subring of an ordered ring $\mathcal{R}$, and denote it by $\mathcal{R}' \subseteq^d \mathcal{R}$, if $\mathcal{R}'$ is a subring of $\mathcal{R}$ and for every $q<r$ from $R$ there is $r'\in R'$ such that $q<r'<r$.

\subsection{Integer-parts-of-roots property}
Let $f(x)$ be a definable unary function on $\M$. By $\ipi{f}$ (integer-parts-of-roots) we denote the following formula:
$$
(a<b \land f(a)\leq y<f(b))\limp(\exists x)(a \leq x < b \land f(x)\leq y < f(x+1)).
$$
The intended meaning of $\M\models\ipi{f}$ can be expressed in vague terms as \uvz{existence of integer parts for all $f$-roots of values $y\in M$} or, in other words, \uvz{existence of integer parts of all values $f^{-1}(y)$, where $f^{-1}$ is an inverse function of $f$, i.e. a function such that $f(f^{-1}(y))=y$, for all $y\in M$}.

If $\mathcal{F}$ is a set of definable unary functions on $\M$, we write $\ipi{\mathcal{F}}$ for the scheme $\set{\ipi{f}}{f\in\mathcal{F}}$.

%$$
%(f(a)-y)\cdot(f(b)-y)\leq 0 \limp (\exists x)((x-a)\cdot(x-b))

\subsection{Arithmetical theories}
\label{sect:arithmeticaltheories}

Now we define several arithmetical theories that we use in this paper. 
Robinson arithmetic ($\mathrm{Q}$) is a basic theory of arithmetic in the language $L=\lang{0,1,+,\cdot,\leq}$. It's axioms are just elementary properties of the symbols from the language. For our purposes the precise axiomatics is not important. We refer the reader to \cite[page 28, Definition 1.1]{HParitmeticbook}.

Peano arithmetic is the extension of $\mathrm{Q}$ by the scheme
\begin{equation}
\label{eq:indukce}
(\varphi(0,\vect{y}) \land (\forall x)(\varphi(x,\vect{y})\limp\varphi(x+1,\vect{y}))\limp (\forall x)\varphi(x,\vect{y})),
\end{equation}
of induction for all $L$-formulas $\varphi(x,\vect{y})$ with a distinguished variable $x$ (see e.g. \cite{Kayearithmeticbook} for a detailed list of axioms and basic properties of $\PA$).

When we extend Robinson arithmetic just by the scheme \eqref{eq:indukce} for all quantifier free (also called open) formulas, we get the arithmetic of open induction ($\IOpen$). Its models are exactly all the discretely ordered semirings that satisfy the induction axioms \eqref{eq:indukce} for all quantifier free formulas $\varphi$.

The theory $\mathrm{IOpenLin}$ (open linear induction) is the extension of Robinson arithmetic by the induction scheme \eqref{eq:indukce} for all linear formulas $\varphi(x,\vect{y})$ with a distinguished variable $x$. Here, we say that $\varphi(x,\vect{y})$ is linear if in every occurrence of $\cdot$ in $\varphi$ at least one of the two factors is $y_i$, for some $i$.

Presburger arithmetic ($\PrA$) is the theory of the structure ${\stru{\N}{0,1,+,\leq}}$. It can be explicitly axiomatized as the theory in the language $L^+=\lang{0,1,+,\leq}$ containing the following axioms:
\begin{center}
\begin{tabular}{cccc}
(A1) & $0\neq z+1$, & (A2) & $x+1=y+1\limp x=y$,\\
(A3) & $x+0=x$, & (A4) & $x+(y+1)=(x+y)+1$,
\end{tabular}\\
\smallskip
(D${}_\leq$) $x\leq y \lekv (\exists z) (x+z=y)$,
\end{center}

and the scheme of induction \eqref{eq:indukce} for all formulas of the language $L^+$.

\section{Results}

%The starting point of our investigation is the result, by Shepherdson \cite{Shep}, that $\modm \IP \RC{\modm}$ iff $\modm \IP \IOpen$.

%\begin{thm}
%Let $\mathcal{N}$ be a model of $\IOpen$. Then there exists an integer part $\modm$ of $\satrcf$ such that $\modm \equiv \mathcal{N}$.
%\end{thm}
%\begin{proof}
%By Shepherdson, $\mathcal{N} \IP \RC{\mathcal{N}}$. Add to the language a new unary predicate $I$ and expand $\RC{\mathcal{N}}$ by realizing the predicate $I$ by $N$. The theory $T$ of this expansion expresses that $I$ is an integer part and elementary equivalent to $\mathcal{N}$. Take the saturated model $\satrcf^+$ of $T$ of cardinality $\omega_1$. $\satrcf^+$ is (isomorphic to) an expansion of $\satrcf$ by $I$. So the realisation of $I$ in $\satrcf$ is the integer part we have been looking for. 
%\end{proof}

%So, up to elementary equivalence, every model of $\IOpen$ is present in $\satrcf$ as its integer part.

%\subsection{Induction and relation between $\modm$, $\FF{\modm}$ and $\RC{\modm}$}

We describe how the relations between $\modm$, $\FF{\modm}$ and $\RC{\modm}$ translate to certain forms of induction in $\modm$. 
%\footnote{We do not distinguish between $\modm$ and its nonnegative part $\modm^+$, but in fact, the induction is in $\modm^+$.} 
The Shepherdson's result on the relation between $\IOpen$ and integer parts of real closures \cite{Shep} can be reformulated in the following way:

\begin{thm}
\label{thm:Shepreformul}
The following are equivalent for any discretely ordered ring $\modm$:
\begin{enumerate}
	\item $\modm \IP \RC{\modm}$, \label{SH1}
	%\item if $r\in\RC{\M}$ is a root of a polynomial with coefficients in $M$, then there is an integer part $m$ of $r$ in $M$,
	\item all roots $r\in\RC{\M}$ of polynomials $p\in M[x]$ have integer parts in $M$, \label{SH2}
	%\item $\modm \vDash$ \uvz{existence of integer parts of all values of real algebraic functions},
	\item $\M\models\ipi{M[x]}$, \label{SH3}
	\item $\M^+ \vDash \IOpen$. \label{SH4}
\end{enumerate}
\end{thm}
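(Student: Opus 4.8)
The plan is to prove the cycle $(\ref{SH1})\Rightarrow(\ref{SH2})\Rightarrow(\ref{SH3})\Rightarrow(\ref{SH4})\Rightarrow(\ref{SH1})$, treating $(\ref{SH1})\Leftrightarrow(\ref{SH2})$ as a soft endpoint and concentrating the real work in the passage between the ``geometric'' conditions $(\ref{SH1}),(\ref{SH2})$ and the ``logical'' conditions $(\ref{SH3}),(\ref{SH4})$. The implication $(\ref{SH1})\Rightarrow(\ref{SH2})$ is immediate: a root $r\in\RC{\M}$ of some $p\in M[x]$ is in particular an element of $\RC{\M}$, so $\modm\IP\RC{\modm}$ supplies an integer part of it. For the reverse reading, needed to fold $(\ref{SH2})$ back in, I would use that $\RC{\modm}$ is the real closure of $\FF{\modm}$, so every $\alpha\in\RC{\M}$ is algebraic over $\FF{\modm}$; clearing denominators in its minimal polynomial yields $p\in M[x]$ with $p(\alpha)=0$, whence an integer part of $\alpha$ exists and $\modm\IP\RC{\modm}$ follows. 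Thus $(\ref{SH1})$ and $(\ref{SH2})$ express the same fact, once one observes that the roots in $\RC{\M}$ of polynomials over $M$ exhaust $\RC{\M}$.

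For $(\ref{SH4})\Rightarrow(\ref{SH1})$ I would use the classical Shepherdson implication recalled in the introduction: from open induction, via the maximum principle for quantifier-free formulas (a standard equivalent of $\IOpen$), every $\alpha\in\RC{\M}$ acquires an integer part by taking the greatest integer below $\alpha$, expressed through the sign of a polynomial vanishing at $\alpha$. The genuinely arithmetic content sits in $(\ref{SH2})\Rightarrow(\ref{SH3})$ and $(\ref{SH3})\Rightarrow(\ref{SH4})$, and in both cases the mechanism is the same. A quantifier-free $\varphi(x,\vect{y})$ is a Boolean combination of conditions $p_i(x)\le 0$, $p_i(x)=0$ with $p_i\in M[x]$, so for fixed parameters its truth value at a point depends only on the sign vector $(\mathrm{sgn}\,p_i(x))_i$, which changes only at roots of the $p_i$ in $\RC{\M}$; since each $p_i$ has finitely many roots, the integer points at which the sign vector can flip are exactly the integer parts of those roots. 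To obtain $(\ref{SH3})$ from $(\ref{SH2})$ I apply the intermediate value property of the real closed field $\RC{\modm}$ to $q(x)=p(x)-y$ across $[a,b]$ and then use $(\ref{SH2})$ to pull the resulting continuous crossing back to an integer $x_0\in[a,b)$ witnessing $\ipi{p}$. To obtain $(\ref{SH4})$ from $(\ref{SH3})$ I argue contrapositively: if $\varphi(0)$ holds, $\varphi$ is closed under successor, yet $\lnot\varphi(c)$, then the integer parts of the (finitely many) roots of $\prod_i p_i$ lying in $[0,c)$ are all available as explicit elements of $M$ by the scheme $\ipi{M[x]}$, and among this finite list the truth value of $\varphi$ must flip from true to false, producing $x_0$ with $\varphi(x_0)\land\lnot\varphi(x_0+1)$ and contradicting closure under successor.

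The hard part will be the conversion of a \emph{continuous} intermediate-value crossing in $\RC{\modm}$ into a genuine \emph{integer-level} transition in $\M$, which is exactly the point where discreteness interacts with the root structure: a root of $\prod_i p_i$ need not be isolated within its unit interval, and a root of even multiplicity produces no sign change at all, so the integer part of a root need not be a place where the integer sign pattern actually flips. I expect to neutralize this by passing to the squarefree part of the relevant polynomial, so that every root becomes a simple sign change, and by focusing on the outermost root governing the last flip of $\varphi$ before $c$; the role of $\ipi{}$ is then precisely to certify that the integer part it produces lies on the correct side of that flip. Once this localization lemma is in place, each bridging implication becomes a finite case analysis over the integer parts of the finitely many roots involved, and the four conditions close into the stated equivalence.
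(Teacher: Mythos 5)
Your attempt cannot be compared against the paper's own argument, because the paper gives none: Theorem~\ref{thm:Shepreformul} is stated as a reformulation of Shepherdson's result and cited to \cite{Shep}. Judged on its own, your proposal has one structural gap and one technical one. The structural gap: condition (\ref{SH3}) is never actually used as a hypothesis anywhere in your cycle. In the step (\ref{SH3})$\Rightarrow$(\ref{SH4}) you take ``the integer parts of the finitely many roots \dots available as explicit elements of $M$ by the scheme $\ipi{M[x]}$'' --- but availability of integer parts of roots is condition (\ref{SH2}), not (\ref{SH3}). An instance of $\ipi{p}$ only returns an integer $x\in[a,b)$ with $p(x)\le y<p(x+1)$ under the hypothesis $p(a)\le y<p(b)$ at integer endpoints $a<b$; it does not hand you $\lfloor r\rfloor$ for a prescribed root $r$: the witness it returns may belong to a different root, and for some roots no admissible pair $(a,b)$ exists at all. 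For instance, over $\M=\Z$ the polynomial $p(x)=5x^2-25x+31\in\Z[x]$ has both roots inside $(2,3)$ and satisfies $p(a)>0$ at every integer $a$, so every instance of $\ipi{p}$ with $y=0$ has a false hypothesis and says nothing about $\lfloor (25+\sqrt 5)/10\rfloor$. Deriving (\ref{SH2}) from (\ref{SH3}) is precisely the hard kernel of Shepherdson's argument: one inducts on the degree of the scaled minimal polynomial $q\in M[x]$ of $r$ over $\FF{\M}$, uses Rolle's theorem in $\RC{\M}$ to separate the roots of $q$ by roots of $q'$, which have lower degree and hence integer parts by the induction hypothesis, and thereby produces $A,B\in M$ with $A<r<B$ and no other root of $q$ in $[A,B]$; only on such an interval does $\ipi{q}$ at level $y=0$ return exactly $\lfloor r\rfloor$. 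As written, your cycle establishes (\ref{SH1})$\Leftrightarrow$(\ref{SH2})$\Rightarrow$(\ref{SH4})$\Rightarrow$(\ref{SH1}) (the last by citing Shepherdson) and (\ref{SH2})$\Rightarrow$(\ref{SH3}), but no implication \emph{from} (\ref{SH3}), so the four-way equivalence is not proved.

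The technical gap is your localization device for (\ref{SH2})$\Rightarrow$(\ref{SH3}), which also infects your sketch of (\ref{SH4})$\Rightarrow$(\ref{SH1}): passing to the squarefree part and taking ``the integer part of the outermost root governing the last flip'' is the wrong selection rule even when all roots are simple sign changes. Concretely, over $\M=\Z$ let $q(x)=(x-1)(5x-12)(2x-5)$, with simple roots $1<12/5<5/2$, and take $a=0$, $b=3$, $y=0$, so $q(0)=-60\le 0<q(3)=6$. The last upward crossing is at $5/2$, whose integer part is $2$, yet $q(2)=2>0$, so $2$ is not a witness; the unique witness is $x=1$, attached to the \emph{first} root. (If by ``last flip'' you instead mean the last flip of the truth value on integers, its existence is exactly the conclusion being proved, so the rule is circular.) What the argument needs is component-based, not root-based: $\{x\in[a,b]:q(x)\le 0\}$ is a finite union of closed intervals, and the correct witness is $\lfloor b_i\rfloor$ for the \emph{last such interval $[a_i,b_i]$ that contains an element of $M$}; equivalently, run a finite chain argument in which the largest integer of a component is a witness unless its successor lands in a strictly later component, which can happen only finitely often. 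That same lemma is what makes the finite-list step in your induction argument rigorous, i.e.\ what guarantees a genuine $x_0$ with $\varphi(x_0)\wedge\neg\varphi(x_0+1)$. With this selection rule and the degree-induction described above, your cycle does close up; without them, both bridging implications fail as stated.
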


For Presburger arithmetic a similar theorem easily follows from well known properties of $\PrA$:

\begin{thm}
\label{thm:Pres}
Let $\modm$ be a discretely ordered ring. Then the following are equivalent:
\begin{enumerate}
	\item $\modm \IP \frac{M}{\mathbb{N}}$,
	%\item $\modm \vDash$ \uvz{existence of integer parts of all values of functions $n^{-1}$, for $0<n\in\N$},
	\item all fractions $m/n$ with $m\in M$ and $0\neq n\in\N$ have integer parts in $M$,
	\item $\M\models\ipi{\set{n(x)}{n\in\N}}$, where $n(x)=x+\ldots+x$ with $n$ summands,
	\item $\modm^+ \vDash \Pres$.
\end{enumerate}
\end{thm}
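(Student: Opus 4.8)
The plan is to establish the four conditions as equivalent by first disposing of the two essentially definitional equivalences $(1)\Leftrightarrow(2)$ and $(2)\Leftrightarrow(3)$, and then reducing the one substantive equivalence $(2)\Leftrightarrow(4)$ to the classical model theory of Presburger arithmetic. This mirrors the structure of Theorem~\ref{thm:Shepreformul}, with $\frac{M}{\N}$ and the multiplication-by-$n$ maps playing the roles that $\RC{\M}$ and arbitrary polynomials from $M[x]$ play there.

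The equivalence $(1)\Leftrightarrow(2)$ is immediate from the definition of an integer part, once one observes that $\M$ is a discretely ordered subring of $\frac{M}{\N}$ (via $m\mapsto m/1$): the condition $\M\IP\frac{M}{\N}$ says precisely that every element of $\frac{M}{\N}$, i.e.\ every fraction $m/n$ with $m\in M$ and $0\neq n\in\N$, admits some $q\in M$ with $m/n-1<q\leq m/n$, which is exactly the existence of an integer part of $m/n$. For $(2)\Leftrightarrow(3)$ I would unwind $\ipi{n(x)}$ for the term $f(x)=n(x)=nx$, using that for $n\geq 1$ the map $x\mapsto nx$ is strictly increasing (for $n=0$ the antecedent of $\ipi{n(x)}$ is unsatisfiable, so that instance is vacuous). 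The clause $f(x)\leq y<f(x+1)$ reads $nx\leq y<n(x+1)$, which says exactly that $x$ is an integer part of $y/n$. Hence for $(3)\Rightarrow(2)$, given $m\in M$ and $n\geq 1$, one sets $y=m$ and picks the explicit bounds $a=-(\lvert m\rvert+1)$, $b=\lvert m\rvert+1$ in $M$, checks $na\leq m<nb$, and reads off the witness $x$ supplied by $\ipi{n(x)}$ as an integer part of $m/n$; conversely $(2)\Rightarrow(3)$ takes $x$ to be the integer part of $y/n$ and uses strict monotonicity of $x\mapsto nx$ to deduce $a\leq x<b$ from $na\leq y<nb$. Both directions are routine inequality manipulations.

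The heart of the theorem is $(2)\Leftrightarrow(4)$, where I would invoke the well-known structure theory of models of $\Pres$. By Presburger's quantifier elimination (in $L^+$ augmented by the congruence predicates), the models of $\Pres$ are exactly the nonnegative parts of \emph{$\Z$-groups}, i.e.\ discretely ordered abelian groups with least positive element $1$ satisfying $[G:nG]=n$ for every $n\geq 1$, equivalently in which every element is congruent modulo $n$ to exactly one of $0,1,\dots,n-1$. The key point is that this last condition is, elementwise, nothing but Euclidean division by $n$: for $m\in M$ and $n\geq 1$ it asserts the existence of $q$ with $m=nq+r$ and $0\leq r<n$, i.e.\ $nq\leq m<nq+n$, which says exactly that $q$ is the integer part of $m/n$. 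So $\M^+\models\Pres$ iff the additive reduct of $\M$ is a $\Z$-group iff every fraction $m/n$ has an integer part in $M$, giving $(4)\Leftrightarrow(2)$. The main obstacle here is not any computation but citing the $\Pres$ facts in the right form: one must make sure the characterization of models of $\Pres$ as nonnegative parts of $\Z$-groups is applied to the additive reduct of $\M$ (so that multiplication, invisible to $\Pres$, is harmlessly forgotten, and the difference group of $\M^+$ is recognized as the additive group of $\M$), and that the $\Z$-group divisibility condition is matched with integer parts of $m/n$ uniformly for negative as well as nonnegative $m$. Once these are pinned down the argument is short, which is why the theorem ``easily follows'' from the known properties of $\Pres$.
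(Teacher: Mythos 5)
Your proof is correct. Note that the paper offers no proof of this theorem at all---it only remarks that it ``easily follows from well known properties of $\PrA$''---and your argument is precisely the standard way of filling in that remark: the equivalences $(1)\Leftrightarrow(2)\Leftrightarrow(3)$ by unwinding definitions, and $(2)\Leftrightarrow(4)$ via the classical characterization of models of $\Pres$ as nonnegative parts of $\Z$-groups, where the $\Z$-group divisibility condition is read as Euclidean division, i.e.\ existence of integer parts of the fractions $m/n$.
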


%Here the function $n^{-1}$ is the inverse function to $x\mapsto n\cdot x$, i.e. the division by $n$.

%\begin{proof}
%Easy.
%\end{proof}

Theorems \ref{TFAE} and \ref{IEO_Claim} state analogous results for other arithmetical theories, providing equivalents to the following algebraic properties: $\modm \IP \FF{\modm}$ and $\FF{\modm} \subseteq^d \RC{\modm}$.

\begin{thm} \label{TFAE}
Let $\M$ be a discretely ordered ring. Then the following are equivalent:
\begin{enumerate}
	\item $\M \IP \FF{\M}$,\label{IOL1}
	\item all fractions $m'/m$ with $m',m\in M$ and $0<m$ have integer parts in $M$,\label{IOL1.5}
	\item $\M \models \ipi{\set{m(x)}{m\in M}}$, where $m(x)=m\cdot x$,\label{IOL1.75}
	\item[(*3)] $\M \vDash (\forall n,k \neq 0)(\exists n')(\exists 0 \leq l < k)(n = n' \cdot k + l)$,\label{IOL2}
	\item $\M^+ \vDash \IOpenLin$. \label{IOL3}
\end{enumerate}
\end{thm}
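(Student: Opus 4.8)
The plan is to treat the four conditions $(\ref{IOL1})$, $(\ref{IOL1.5})$, $(\ref{IOL1.75})$ and $(*3)$ as mutual restatements, obtained by unwinding the definitions, and to concentrate the real work on their equivalence with $\M^+\vDash\IOpenLin$. First I would dispose of the elementary equivalences. For $(\ref{IOL1})\Leftrightarrow(\ref{IOL1.5})$ it suffices to note that $\FF{\M}$ is exactly the set of fractions $m'/m$ with $m',m\in M$, $m\neq 0$, and that (replacing $m'/m$ by $(-m')/(-m)$) we may always take $0<m$; the condition $\M\IP\FF{\M}$ then says precisely that each such fraction has an integer part. For $(\ref{IOL1.5})\Leftrightarrow(\ref{IOL1.75})$ I would observe that, for $m>0$, an integer part of $y/m$ is an $x$ with $mx\le y<mx+m=m(x+1)$, which is exactly the conclusion of $\ipi{m(x)}$; the only point to check is that the hypothesis $a<b\land ma\le y<mb$ of $\ipi{m(x)}$ can be met, which is done by taking $a=0$, $b=y+1$ for $y\ge 0$ and symmetrically for $y<0$ (the cases $m\le 0$ make the implication vacuous). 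Finally $(\ref{IOL1.5})\Leftrightarrow(*3)$: division with remainder $n=n'k+l$, $0\le l<k$, by $k>0$ says exactly that $n'$ is the integer part of $n/k$, and the sign bookkeeping needed to pass between the semiring statement $(*3)$ and the ring statement $(\ref{IOL1.5})$ is routine.

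The substance is the equivalence with $\IOpenLin$. For $(\ref{IOL3})\Rightarrow(\ref{IOL1.75})$ I would prove each instance $\ipi{m(x)}$ from a single application of linear induction. Assuming $m>0$, $a<b$ and $ma\le y<mb$, set $\psi(x)\equiv(mx\le y)$, so $\psi(a)$ holds and $\psi(b)$ fails; I seek $x\in[a,b)$ with $\psi(x)\land\neg\psi(x+1)$. If there were none, then $\psi(x)\limp\psi(x+1)$ for every $x\in[a,b)$, and the quantifier-free linear formula
\[
\Phi(z)\;\equiv\;\bigl(a+z\le b\ \limp\ m(a+z)\le y\bigr)
\]
satisfies $\Phi(0)$ and $\Phi(z)\limp\Phi(z+1)$; by $\IOpenLin$ we get $\Phi(z)$ for all $z\ge 0$, and evaluating at $z=b-a\ge 0$ yields $\psi(b)$, a contradiction. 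Note $\Phi$ is linear because the only product involving $z$ is $m\cdot z$, with $m$ a parameter.

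For the converse $(\ref{IOL1.5})\Rightarrow(\ref{IOL3})$ I expect the main obstacle to lie. Here I would fix a quantifier-free linear formula $\varphi(x,\vect{y})$ and parameters $\vect{b}\in M$ and bring $\varphi(x,\vect{b})$ into normal form: after collecting the terms linear in $x$, each atomic subformula has the shape $s\,x\bowtie t$ with $s,t\in M$ and $\bowtie\in\{=,\le\}$. For elements of $M$ each such atom has constant truth value off a single breakpoint, which by $(\ref{IOL1.5})$ is the integer part of the fraction $t/s\in\FF{\M}$ and so lies in $M$; distinguishing $s>0$, $s<0$, $s=0$, the solution set of the atom in $M$ is an integer-endpoint half-line, a single point, or all/nothing. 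Since $\varphi$ is a Boolean combination of finitely many atoms, its solution set $S=\set{x\in M}{\M\models\varphi(x,\vect{b})}$ is a finite union of intervals and points with endpoints in $M$. Given $\M\models\varphi(0,\vect{b})$ and closure of $S$ under $x\mapsto x+1$, the finiteness of this decomposition forces $S\supseteq\set{x\in M}{x\ge 0}$: a bounded maximal interval of $S$ would have a largest element whose successor falls in a gap, and the rightmost interval cannot be bounded for the same reason, so $0\in S$ together with successor-closure leaves only the full final segment. This yields $\M\models(\forall x)\varphi(x,\vect{b})$.

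The two genuinely delicate points I would treat with care are, first, the normalization of arbitrary linear terms into the form $s\,x+t$ over the parameters, in particular the dependence of the coefficient $s$ on $\vect{b}$ and the separate handling of $s=0$ and $s<0$; and second, the interval bookkeeping in the last step, which is exactly where existence of integer parts is used to guarantee that there are only finitely many breakpoints, all in $M$, so that successor-closure upgrades from the standard successors of $0$ to every nonnegative element of $M$.
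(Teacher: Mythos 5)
Your proposal is correct and takes essentially the same route as the paper: the equivalences among (\ref{IOL1}), (\ref{IOL1.5}), (\ref{IOL1.75}) and $(*3)$ are dispatched as definitional unwindings, the hard implication to $\IOpenLin$ is proved by decomposing the solution set of a quantifier-free linear formula into finitely many intervals whose endpoints are fractions $v/u$ with integer parts in $M$, and the converse is a single application of linear induction. The only cosmetic difference is that the paper closes its cycle through $(*3)$ using the unguarded formula $n'\cdot k\le n$, whereas you target $\ipi{m(x)}$ directly with the guarded formula $\Phi(z)$; this changes nothing of substance.
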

\begin{proof}
Clearly, $\ref{IOL1}\Leftrightarrow\ref{IOL1.5}$ and $\ref{IOL1.75}\Leftrightarrow{}^*\ref{IOL2}$. Further:
\begin{itemize}
	\item $\ref{IOL1} \Rightarrow \ref{IOL3}$: Let $\varphi(x,\bar{y})$ be a quantifier free linear formula, $\bar{m} \in M$. We prove that the induction axiom for $\varphi(x,\bar{m})$ holds in $\M$. $\varphi(x,\bar{m})$ is equivalent to a boolean combination of formulas of the form $x \cdot u \geq v$, for some $u,v \in M$. Suppose that $\M^+ \vDash \varphi(0,\bar{m}) \wedge \neg(\forall x) \varphi(x,\bar{m})$. The set $\{x \in M^+; \M^+ \vDash \varphi(x,\bar{m})\}$ is a finite union of intervals with endpoints of the form $\frac{v}{u}$, for some $u,v \in M$, or $\pm \infty$. By condition \ref{IOL1}, for any $u,v \in M$, there exists $w \in M$ such that $w = \left\lfloor \frac{v}{u}\right\rfloor$. This implies that $\M^+ \vDash (\exists w)(\varphi(w,\bar{m}) \wedge \neg \varphi(w+1, \bar{m}))$.
	\item $\ref{IOL3} \Rightarrow {}^*\ref{IOL2}$: Fix $n,k \neq 0 \in M^+$. $\M \vDash 0 \cdot k \leq n \wedge \neg(\forall n')(n' \cdot k \leq n)$, however $n' \cdot k \leq n$ is an open linear formula. Condition \ref{IOL3} implies $\M^+ \vDash (\exists n')(n' \cdot k \leq n < (n' + 1) \cdot k)$; such an $n'$ and $l:=n - n' \cdot k$ satisfy the condition ${}^*$\ref{IOL2}.
	\item ${}^*\ref{IOL2} \Rightarrow \ref{IOL1}$: Let $\frac{n}{k} \in \FF{\M}$ and $n',l \in M$ satisfying the condition ${}^*$\ref{IOL2} for $n,k$. Then $\frac{n}{k} = \frac{n' \cdot k + l}{k} = n' + \frac{l}{k}$ and $\left|n' - \frac{n}{k}\right| = \frac{l}{k} < 1$.
\end{itemize}
\end{proof}

\begin{lemma} \label{<1density}
Let $r \in \RC{\M}$ satisfy $0 < r < 1$. Then there is some $m \in M^+$ such that $0 < \frac{1}{m} < r$.
\end{lemma}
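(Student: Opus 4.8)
The plan is to prove that every element of $\RC{\M}$ is bounded above by an element of $M^+$, and then apply this to $s := 1/r$. Since $0 < r < 1$, the element $s \in \RC{\M}$ satisfies $s > 1 > 0$, and because $r$ and $m$ are positive the goal $0 < 1/m < r$ is equivalent to $s < m$. So it suffices to produce $m \in M^+$ with $s < m$.

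The key observation is that $s$ is algebraic over $\FF{\M}$, since $\RC{\M}$ is by construction an algebraic extension of $\FF{\M}$. Thus $s$ is a root of some $p(x) = a_n x^n + \cdots + a_0$ with coefficients in $\FF{\M}$ and $a_n \neq 0$; after clearing denominators I may assume $a_0, \ldots, a_n \in M$. I would then invoke the classical Cauchy bound: every root satisfies $|s| < 1 + \max_{i < n} |a_i / a_n|$. The point to verify is that this estimate is not special to $\R$ but holds in any real closed (indeed any ordered) field, as its proof relies only on the triangle inequality and a geometric-series estimate, both available from the ordered-field axioms satisfied by $\RC{\M}$.

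It remains to turn this bound into a genuine element of $M^+$. Here I use that $\M$ is discretely ordered with $1$ as least positive element, so $a_n \neq 0$ forces $|a_n| \geq 1$ and hence $|a_i / a_n| \leq |a_i|$ for each $i$. Therefore $s < 1 + \max_{i<n}|a_i| \leq 1 + \sum_{i<n}|a_i| =: m$, where $m$ is the sum of $1$ with finitely many nonnegative elements of $M$, so $m \in M^+$ and $m \geq 1 > 0$. This $m$ satisfies $1/r = s < m$, i.e. $0 < 1/m < r$, as desired.

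The only real subtlety I anticipate is this final conversion: the a priori bound produced by Cauchy's estimate naturally lives in $\FF{\M}$, and one must ensure it can be replaced by an actual element of the semiring $M^+$. This is precisely where discreteness enters, through the inequality $|a_n| \geq 1$. Checking the Cauchy bound inside an arbitrary real closed field is routine but deserves to be made explicit, since one cannot merely cite the familiar real-number case.
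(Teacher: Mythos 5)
Your proof is correct, but it takes a genuinely different route from the paper's. The paper argues by contradiction: assuming $0 < r < \frac{1}{m}$ for every $m \in M^+$ (so that $r$ lies below every positive element of $\FF{\M}$), it takes $f \in \FF{\M}[x]$ of least degree with root $r$ and observes that its constant term must vanish --- otherwise that nonzero constant would be a linear combination of positive powers of $r$, hence smaller in absolute value than every positive element of $\FF{\M}$, which is absurd --- so $f(x) = x \cdot g(x)$ and $r$ is a root of $g$, contradicting minimality of degree. You instead pass to $s = 1/r$ and bound it above directly by an element of $M^+$, via the Cauchy root bound (which, as you correctly note, holds over any ordered field since its proof uses only the triangle inequality and the geometric-series identity) together with the discreteness observation $|a_n| \geq 1$, which lets you replace the bound $1 + \max_{i<n} |a_i/a_n| \in \FF{\M}$ by $1 + \sum_{i<n} |a_i| \in M^+$. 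Both arguments rest on the same essential fact, namely that $\RC{\M}$ is algebraic over $\FF{\M}$, and they are dual in spirit (``no element of $\RC{\M}$ is infinitesimal relative to $\FF{\M}$'' versus ``no element of $\RC{\M}$ is infinite relative to $M$''). What your version buys: it is fully explicit where the paper is sketchy (the paper's ``easily a contradiction'' conceals exactly the kind of estimate you carry out), it avoids the minimal-polynomial device, and it actually proves the stronger statement that $M^+$ is cofinal in $\RC{\M}$. What the paper's version buys: brevity, and no need to re-verify a classical root bound in the abstract ordered-field setting.
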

\begin{proof}
Suppose for contradiction that for every $m \in M^+$ it is $0 < r < \frac{1}{m}$. Let $f(x) \in \FF{\M}[x]$ be of the least degree such that $r$ is the root of $f(x)$. Then the absolute coeficient of $f(x)$ is $0$ (if not then it can be expressed as a linear combination of powers of $r$ which is easily a contradiction) and $f(x) = g(x) \cdot x$ for some $g(x) \in \FF{\M}[x]$. This contradicts the minimality of degree of $f(x).$
\end{proof}

\textbf{Notation:}
\\
$$\Ilx \varphi(x,\bar{y}) = \Ilx \varphi := ((\forall u < l) \varphi(u,\bar{y}) \wedge (\forall v)(\varphi(v,\bar{y}) \rightarrow \varphi(v+l, \bar{y}))) \rightarrow (\forall x)\varphi(x,\bar{y})$$

$\eIOpen$ is an axiom schema $(\exists l > 0)(\Ilx \psi(\frac{x}{l},\bar{y}))$, where $\psi(x,\bar{y})$ is an open formula.

If we rewrite $\Ilx \psi(\frac{x}{l},\bar{y})$ we get the following formula
$$ ((\forall u < l) \psi(\frac{u}{l},\bar{y}) \wedge (\forall v)(\psi(\frac{v}{l},\bar{y}) \rightarrow \psi(\frac{v}{l}+1, \bar{y}))) \rightarrow (\forall x)\psi(\frac{x}{l},\bar{y}).$$

\begin{thm} \label{IEO_Claim}
Let $\M$ be a discretely ordered ring. Then the following are equivalent:
\begin{enumerate}
	\item $\FF{\M} \subseteq^d \RC{\M}$, \label{IEO1}
	\item $(\forall r \in \RC{\M})(\exists f \in \FF{\M}) (r < f < r+1)$, \label{IEO2}
	\item $(\forall r \in \RC{\M}) (\exists l,m \in M) (m \leq lr < m + l)$ [i.e. $\frac{m}{l} \leq r < \frac{m}{l} + 1$], \label{IEO3}
	\item $\M^+\vDash \eIOpen$. \label{IEO4}
\end{enumerate}
\end{thm}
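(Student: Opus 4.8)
The plan is to close the cycle among the three ``density'' reformulations \ref{IEO1}, \ref{IEO2}, \ref{IEO3} by elementary manipulation (using Lemma \ref{<1density}), and then to tie them to the scheme \ref{IEO4} via $\ref{IEO4}\Rightarrow\ref{IEO3}$ and $\ref{IEO3}\Rightarrow\ref{IEO4}$. A fact I would use throughout is that $M$ is cofinal in $\RC{\M}$: every $r\in\RC{\M}$ is a root of some $p\in M[x]$, hence bounded by a Cauchy root bound lying in $\FF{\M}$, and every element of $\FF{\M}$ is bounded by one of $M$. For the density equivalences, $\ref{IEO1}\Rightarrow\ref{IEO2}$ is immediate from applying density to $r<r+1$; for $\ref{IEO2}\Rightarrow\ref{IEO1}$, given $q<r$ with $d=r-q$, if $d\ge 1$ a fraction in $(q,q+1)$ already lies in $(q,r)$, and if $0<d<1$ then Lemma \ref{<1density} gives $m\in M^+$ with $0<\tfrac1m<d$, so $mr-mq>1$ and a fraction in $(mq,mq+1)\subseteq(mq,mr)$, divided by $m$, lands strictly between $q$ and $r$; finally $\ref{IEO2}\Leftrightarrow\ref{IEO3}$ is a shift by one, a fraction in $(r-1,r]$ being exactly the data $\tfrac ml\le r<\tfrac ml+1$, with the boundary cases where $r$ is itself a fraction handled directly.

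For $\ref{IEO4}\Rightarrow\ref{IEO3}$, fix $r\in\RC{\M}$ and assume $r\ge 0$ (the case $r<0$ follows by applying the result to $-r$, a root of the reflected polynomial). Let $\mu\in M[x]$ have $r$ among its real roots; since $\RC{\M}$ is real closed it admits quantifier elimination, so the set $\{x:x<r\}$, definable over the coefficients of $\mu$ by specifying which root $r$ is, is defined by an open formula $\psi(x)$ over $M$. Applying \ref{IEO4} to $\psi$ yields $l>0$ with $\M^+\models\Ilx\psi(\tfrac xl)$; here $\psi(\tfrac xl)$ says $\tfrac xl<r$, so the conclusion $(\forall x)\,\tfrac xl<r$ is \emph{false} by cofinality of $M$, whence the premise fails. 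Failure of the base clause forces $\tfrac{l-1}{l}\ge r$, so $0\le r<1$ and \ref{IEO3} holds with $\tfrac ml=0$; failure of the step clause produces a grid point $t$ with $t<r\le t+1$, which is exactly \ref{IEO3} (the boundary case $r=t+1$ being immediate, as $r$ is then a fraction).

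The main work is $\ref{IEO3}\Rightarrow\ref{IEO4}$, where I may freely use the equivalent density \ref{IEO1}. Fix an open $\psi(x,\bar y)$ and parameters $\bar m$; over $\RC{\M}$ the solution set $S=\{r:\psi(r,\bar m)\}$ is a finite union of intervals whose endpoints lie among the roots $r_1<\dots<r_k\in\RC{\M}$ of the polynomials occurring in $\psi(x,\bar m)$. Unwinding $\Ilx\psi(\tfrac xl,\bar m)$, its premise asserts that every grid point $\tfrac ul$ with $u<l$ lies in $S$ (base) and that $\tfrac vl\in S$ implies $\tfrac vl+1\in S$ for every grid point (step), while its conclusion asserts that \emph{every} nonnegative grid point lies in $S$. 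I would choose the denominator $l$ so that whenever the conclusion fails, the premise fails too: if a nonnegative grid point lies outside $S$ it sits in a gap $(\gamma,\delta)$ of $S$ with $\gamma\ge 0$ a root, and since points just left of $\gamma$ lie in $S$, a grid point $q$ with $q\in S$ and $q+1\in(\gamma,\delta)$ falsifies the step clause. Using density I plant, for a single common denominator $l$ supplied by \ref{IEO3}/\ref{IEO1} applied to each $r_i$, a grid point adjacent to every root, so that such a step-witness (or, for a gap point inside $[0,1)$, a base-witness) is always present.

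The hard part is exactly that this witness cannot in general be obtained by ``walking back'' from the offending grid point: a grid point deep inside a gap of length $\ge 1$ need not have its immediate predecessor outside $S$, and the descent toward $\gamma$ may be of nonstandard length, hence unavailable as an argument inside $\M$. The resolution is to use density to manufacture the witness directly next to the boundary $\gamma$ rather than to locate it by descent, and the genuine obstacle is to check that one denominator $l$ works simultaneously for all finitely many roots and in both regimes — long gaps (length $\ge 1$), where a grid point just left of $\gamma$ already has its successor inside the gap and so falsifies the step, and short gaps, where the successor of a gap point re-enters $S$ — so that for this $l$ the premise of $\Ilx\psi(\tfrac xl,\bar m)$ and the negation of its conclusion are never simultaneously satisfied.
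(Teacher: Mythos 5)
Your cycle among \ref{IEO1}, \ref{IEO2} and \ref{IEO3} is correct and uses Lemma \ref{<1density} exactly as the paper does. Your direction $\ref{IEO4}\Rightarrow\ref{IEO3}$ is also correct, and it takes a genuinely different, arguably cleaner route: you obtain an open formula over $M$ defining $\{x: x<r\}$ in one stroke from quantifier elimination for real closed fields, whereas the paper differentiates a polynomial vanishing at $r$ until it has a simple root $r'$ with no fraction between $r$ and $r'$, and then writes the formula down by hand as $(x \leq f') \wedge (x \leq f \vee p'(x) \leq 0)$. Both routes then refute the conclusion of $\Ilx$ by cofinality of $M$ in $\RC{\M}$ and read \ref{IEO3} off the failure of the base or step clause; only cosmetic care is needed in your version to rewrite the eliminated formula (which mentions $-$ and possibly negative parameters) as an open formula over $\M^+$.

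The genuine gap is in $\ref{IEO3}\Rightarrow\ref{IEO4}$, exactly at what you name ``the genuine obstacle'' and then leave unresolved --- and, worse, the plan you outline cannot be carried out. You want a single denominator $l$ planting a step-witness adjacent to \emph{every} gap of $S$, i.e.\ a grid point lying in $S\cap(\gamma-1,\delta-1)$ for each gap $(\gamma,\delta)$. But this set may contain no fraction at all, so no choice of $l$ can work: already in $\M=\Z$, for $\psi(x):=(x\leq 5 \vee x\cdot x=200 \vee x\geq 100)$ the solution set is $S=[0,5]\cup\{\sqrt{200}\}\cup[100,\infty)$, and for the gap $(\sqrt{200},100)$ one has $S\cap(\sqrt{200}-1,99)=\{\sqrt{200}\}$, which is irrational. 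The missing idea --- the heart of the paper's proof --- is that the premise of $\Ilx\psi(\frac{x}{l})$ only needs to fail \emph{once}, so it suffices to work at the \emph{first} gap, where no coordination between roots is needed: either some fraction in $[0,1)$ falsifies $\psi$, and taking $l$ to be its denominator kills the base clause; or else, letting $r$ be the largest root such that $\psi$ holds on $[0,r)\cap\FF{\M}$ and $r'$ the next root (the degenerate cases where $\psi$ holds at all nonnegative fractions being immediate, since then the conclusion of $\Ilx$ holds for every $l$), density supplies a fraction $f$ with $1\leq r<f<\min(r',r+1)$; then $f-1=\frac{m}{l}$ satisfies $\psi(\frac{m}{l})$ while $\neg\psi(\frac{m}{l}+1)$, killing the step clause for that single $l$. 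Because all of $[0,r)$ lies in $S$, the clustered-root, isolated-point and short-gap pathologies you worry about simply never arise at this first boundary (in the $\Z$-example the step already fails at $5\mapsto 6$, even though the second gap admits no witness). By trying to make the premise fail at every gap simultaneously, your argument aims at a statement that is both stronger than needed and false, and without the localization to the first gap the main direction remains unproved.
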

\begin{proof}
$\ref{IEO1} \Rightarrow \ref{IEO4}$: Let $\psi(x,\bar{y})$ be an open formula, $\bar{a} \in M^+$; in the next, $\psi(x)$ means $\psi(x,\bar{a})$. $\psi(x)$ can be written as a boolean combination of fomulas of the form $p(x) \geq 0$, where $p(x) \in \FF{\M}[x]$. $\psi(x)$ can change its truth value only at roots of these polynomials. 

Suppose there is some $f \in \FF{\M}$ such that $0 \leq f < 1$ and $\neg \psi(f)$. As $f = \frac{u}{l}$ for some $u < l \in M^+$, we have $\neg \psi(\frac{u}{l})$ and $\Ilx \psi(x)$ holds. 

In the next, we suppose $\psi(x)$ holds on $[0,1) \cap \FF{\M}$. Let $r \in \RC{\M}$ be the largest root of some polynomial from $\psi(x)$ such that $\psi(x)$ holds on $[0,r) \cap \FF{\M}$ (if there is not such an $r$, $\psi(x)$ holds on all $f \in \FF{\M}$; in particular, $\Ilx \psi(x)$ holds for any $l > 0$). Let $r' \in \RC{\M}$ be the smallest root of some polynomial from $\psi(x)$ greater then $r$ (or $r' := +\infty$, if such a root does not exist). By \ref{IEO1}, there exists some $f \in \FF{\M}$ with $1 \leq r < f < r'$, $f < r+1$. Let us take $m,l \in M^+$ such that $f - 1 = \frac{m}{l}$. Then $0 < \frac{m}{l} < r < \frac{m}{l} + 1 < r'$. Then, by the choice of $r, r'$, it holds $\psi(\frac{m}{l})$ and $\neg \psi(\frac{m}{l} + 1)$. That implies $\Ilx \psi(\frac{x}{l})$.

$\ref{IEO4} \Rightarrow \ref{IEO3}$: $r \approx r'$ denotes that there is no $f \in \FF{\M}$ between $r$ and $r'$. Let us fix $r \geq 1$, the case for $0 \leq r < 1$ is trivial, $r < 0$ symmetric. Note that it suffices to find some $m,l \in M$ such that \ref{IEO3} holds for $m,l$ and any $r' \approx r$. Let $p(x) \in \FF{\M}[x]$ be such that $p(r) = 0$. By (repeateadly) differentiating $p(x)$, we arrive at some polynomial $p'(x)$ such that it has only one, simple root $r' \approx r$. We may suppose that $p'(x)$ is increasing at $r'$. Choose $f,f' \in \FF{\M}$ so that $r'$ is the only root of $p'(x)$ between them. 

Consider $\psi(x) := (x \leq f') \wedge (x \leq f \vee p'(x) \leq 0)$. Obviously, $\psi(x)$ holds on $[0, r']$ and does not hold on $(r', +\infty)$. By \ref{IEO4}, there exists $l \in M$, $l > 0$ such that $\Ilx \psi(x)$. We chose $r \geq 1$, thus also $r' \geq 1$. Therefore $\psi(\frac{u}{l})$ holds for every $u < l$. On the other hand, $\neg (\forall x) \psi(\frac{u}{l})$. From $\Ilx \psi(x)$, we get $m \in M$ such that $\psi(\frac{m}{l})$ and $\neg \psi(\frac{m}{l} + 1)$. This implies $\frac{m}{l} \leq r' < \frac{m}{l} + 1$ and we are done.

$\ref{IEO3} \Rightarrow \ref{IEO2}$: Easy.

$\ref{IEO2} \Rightarrow \ref{IEO1}$: Let $r < r' \in \RC{\M}$. By Lemma \ref{<1density}, we may fix $0 < k \in M$ such that $\frac{1}{k} < r' - r$. Then $kr < kr + 1 < kr'$. By \ref{IEO2}, there exists $f \in \FF{\M}$ for which $kr < f < kr + 1 < kr'$ holds. This implies $r < \frac{f}{k} < r'$. Since $\frac{f}{k} \in \FF{\M}$ we are done.
\end{proof}

\medskip

As clearly $(\M\IP \FF{\M} \land \FF{\M}\subseteq^d\RC{\M}) \limp \M\IP\RC{\M}$, we get the following:
\begin{cor}
$\IOpen = \IOpenLin + \eIOpen$.
\end{cor}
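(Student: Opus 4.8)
The plan is to prove the corollary $\IOpen = \IOpenLin + \eIOpen$ as an equality of theories by showing mutual provability, leveraging the algebraic characterizations already established. Since each of the three theories $\IOpen$, $\IOpenLin$, and $\eIOpen$ has been given a model-theoretic equivalent over discretely ordered rings (via Theorems~\ref{thm:Shepreformul}, \ref{TFAE}, and \ref{IEO_Claim} respectively), the natural route is to argue at the level of models: I would show that for every discretely ordered ring $\M$, one has $\M^+\vDash\IOpen$ if and only if $\M^+\vDash\IOpenLin+\eIOpen$.

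First I would unwind the three characterizations into a single algebraic statement. By Theorem~\ref{thm:Shepreformul}, $\M^+\vDash\IOpen$ is equivalent to $\M\IP\RC{\M}$. By Theorem~\ref{TFAE}, $\M^+\vDash\IOpenLin$ is equivalent to $\M\IP\FF{\M}$. By Theorem~\ref{IEO_Claim}, $\M^+\vDash\eIOpen$ is equivalent to $\FF{\M}\subseteq^d\RC{\M}$. So the whole corollary reduces to the purely algebraic equivalence
\[
\M\IP\RC{\M}\ \Longleftrightarrow\ \bigl(\M\IP\FF{\M}\ \land\ \FF{\M}\subseteq^d\RC{\M}\bigr).
\]
The forward-pointing hint in the text supplies the easy direction: the implication $(\M\IP\FF{\M}\land\FF{\M}\subseteq^d\RC{\M})\limp\M\IP\RC{\M}$ is asserted to be clear, and indeed it follows by transitivity of the ``sits densely/integer-part'' relationships---given $r\in\RC{\M}$, density gives an $f\in\FF{\M}$ very close to $r$ (within the gap below $r$), and then the integer part of $f$ in $M$ serves as an integer part of $r$. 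I would spell out this short chain carefully to confirm $\M\IP\RC{\M}$.

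For the converse I would assume $\M\IP\RC{\M}$ and derive both conjuncts. Since $\FF{\M}\subseteq\RC{\M}$ as ordered rings, any integer part of an element of $\FF{\M}$ computed inside $\RC{\M}$ already lies in $M$ and witnesses $\M\IP\FF{\M}$; this is immediate. The density conjunct $\FF{\M}\subseteq^d\RC{\M}$ is the only part requiring genuine work: given $r<r'$ in $\RC{\M}$, I must produce some $f\in\FF{\M}$ strictly between them. Here I would reuse exactly the argument from the proof of $\ref{IEO2}\Rightarrow\ref{IEO1}$ in Theorem~\ref{IEO_Claim}: invoke Lemma~\ref{<1density} to find $0<k\in M$ with $\frac1k<r'-r$, so that $kr<kr+1<kr'$, then use $\M\IP\RC{\M}$ to get an integer $m\in M$ with $kr<m\le kr+1<kr'$ (an element of $\FF{\M}$, even of $M$), giving $r<\frac{m}{k}<r'$.

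I expect the main obstacle to be presentational rather than mathematical: because the text states the corollary as an equality of \emph{theories}, I must be explicit that the model-class equivalences above genuinely yield theory-equality. The cleanest way is to note that all four conditions in each of the three theorems are proved equivalent over the class of discretely ordered rings, so the schemes $\IOpen$ and $\IOpenLin+\eIOpen$ are satisfied by exactly the same discretely ordered semirings $\M^+$; since both theories extend $\RA$ and discretely ordered semirings are precisely the intended models, having the same models over this class is what the equality asserts. The only subtlety to flag is that the density direction is not a formal triviality but depends on Lemma~\ref{<1density} and the integer-part hypothesis, so I would not merely cite ``clearly'' for that half but point to the reused computation.
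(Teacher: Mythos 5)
Your proof is correct, but it takes a genuinely different route for one of the two inclusions, replacing what the paper treats as a triviality with real work. For the inclusion $\IOpen\subseteq\IOpenLin+\eIOpen$ you argue exactly as the paper does: a model of $\IOpenLin+\eIOpen$ satisfies $\M\IP\FF{\M}$ and $\FF{\M}\subseteq^d\RC{\M}$ by Theorems~\ref{TFAE} and~\ref{IEO_Claim}, hence $\M\IP\RC{\M}$ by the displayed implication, hence it models $\IOpen$ by Theorem~\ref{thm:Shepreformul}. For the reverse inclusion, however, the paper never touches models: every instance of linear induction is literally an instance of open induction, and every instance of $\eIOpen$ is proved from $\IOpen$ by witnessing $l=1$, since $I^1_x\,\psi(x/1,\bar y)$ is just the ordinary induction axiom for the open formula $\psi$; so $\IOpen$ proves both schemes outright, with no semantics and none of the three theorems. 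You instead prove the converse algebraic implication $\M\IP\RC{\M}\Rightarrow(\M\IP\FF{\M}\,\wedge\,\FF{\M}\subseteq^d\RC{\M})$ --- correctly: the first conjunct is immediate, and the density conjunct re-runs the $\ref{IEO2}\Rightarrow\ref{IEO1}$ argument with Lemma~\ref{<1density} --- and then route back through Theorems~\ref{TFAE}, \ref{IEO_Claim} and completeness. This is sound, and it has the bonus of directly establishing the paper's second corollary (the algebraic equivalence), which the paper instead derives from the syntactic inclusion; but it is longer than necessary, and it makes \emph{both} inclusions depend on the tacit assumption --- which the paper needs only for its nontrivial inclusion --- that all models of these fragments are nonnegative parts of discretely ordered rings, so that the three theorems apply; your closing remark about ``intended models'' gestures at this point rather than resolving it. One small slip to fix: in your sketch of the easy implication, choosing $f\in\FF{\M}$ in the gap \emph{below} $r$ and taking its integer part $m$ only yields $m>r-2$; either choose $f$ with $r<f<r+1$, or add the one-line case split showing that $m$ or $m+1$ is an integer part of $r$.
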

\begin{proof}
The inclusion $\subseteq$ follows from the above implication. The other inclusion is trivial as all instances of the induction scheme in $\IOpenLin$ and $\eIOpen$ are just special cases of open induction.
\end{proof}

Similarily from the inclusion $\IOpen \supseteq \IOpenLin + \eIOpen$, we get:
\begin{cor}
$(\M\IP \FF{\M} \land \FF{\M}\subseteq^d\RC{\M}) \lekv \M\IP\RC{\M}$.
\end{cor}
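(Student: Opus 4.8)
The plan is to reduce the corollary to the three Shepherdson-type equivalences already at hand, namely Theorems~\ref{thm:Shepreformul}, \ref{TFAE} and~\ref{IEO_Claim}, together with the elementary comparison of the schemes $\IOpen$ and $\IOpenLin + \eIOpen$. The key observation is that each algebraic condition occurring in the corollary has an arithmetical counterpart: by Theorem~\ref{TFAE}, $\M \IP \FF{\M}$ is equivalent to $\M^+ \vDash \IOpenLin$; by Theorem~\ref{IEO_Claim}, $\FF{\M} \subseteq^d \RC{\M}$ is equivalent to $\M^+ \vDash \eIOpen$; and by Theorem~\ref{thm:Shepreformul}, $\M \IP \RC{\M}$ is equivalent to $\M^+ \vDash \IOpen$. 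Through this dictionary the asserted biconditional is nothing but $\M^+ \vDash \IOpenLin + \eIOpen \lekv \M^+ \vDash \IOpen$, so it is enough to verify the two inclusions between the theories $\IOpen$ and $\IOpenLin + \eIOpen$.

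For the implication $\M \IP \RC{\M} \limp (\M \IP \FF{\M} \land \FF{\M} \subseteq^d \RC{\M})$ I would start from $\M^+ \vDash \IOpen$, supplied by Theorem~\ref{thm:Shepreformul}, and use the inclusion $\IOpen \supseteq \IOpenLin + \eIOpen$. This inclusion is immediate: every instance of linear induction is an instance of open induction, and every instance of $\eIOpen$ follows from open induction by instantiating $l = 1$, for then $\Ilx \psi(\frac{x}{l},\bar{y})$ collapses to the ordinary open induction axiom for $\psi$ (the base hypothesis $(\forall u < 1)\psi(u,\bar{y})$ becomes $\psi(0,\bar{y})$ and the step becomes $\psi(v,\bar{y}) \rightarrow \psi(v+1,\bar{y})$). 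Hence $\M^+ \vDash \IOpenLin$ and $\M^+ \vDash \eIOpen$, and feeding these back into Theorems~\ref{TFAE} and~\ref{IEO_Claim} yields $\M \IP \FF{\M}$ and $\FF{\M} \subseteq^d \RC{\M}$ respectively.

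The converse implication $(\M \IP \FF{\M} \land \FF{\M} \subseteq^d \RC{\M}) \limp \M \IP \RC{\M}$ is the one already recorded just before the first corollary; to keep the argument self-contained I would re-derive it directly. Given $r \in \RC{\M}$, density of $\FF{\M}$ in $\RC{\M}$ furnishes $f \in \FF{\M}$ with $r < f < r+1$, and $\M \IP \FF{\M}$ furnishes an integer part $a \in M$ of $f$, so that $f - 1 < a \leq f$. Combining the inequalities gives $r - 1 < a < r + 1$, whence either $a$ (when $a \leq r$) or $a - 1$ (when $a > r$) is an integer part of $r$ lying in $M$; this is exactly $\M \IP \RC{\M}$.

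I expect no serious mathematical obstacle here: once the three equivalence theorems are granted, the corollary is a transcription of the trivial containment $\IOpen \supseteq \IOpenLin + \eIOpen$ and of the short algebraic fact above. The only points requiring care are clerical, ensuring that the dictionary is applied in the correct direction for each of the four objects, and checking that the formally written $\frac{x}{l}$ in $\eIOpen$ is read in $\M^+$ after clearing denominators, so that the specialization $l = 1$ genuinely reproduces open induction.
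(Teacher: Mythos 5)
Your proposal is correct and takes essentially the same route as the paper: the direction $(\M\IP \FF{\M} \land \FF{\M}\subseteq^d\RC{\M}) \limp \M\IP\RC{\M}$ is exactly the elementary algebraic argument the paper records as ``clearly'' just before its first corollary, and the converse is obtained, as in the paper, by pushing the trivial theory inclusion $\IOpen \supseteq \IOpenLin + \eIOpen$ through the dictionary given by Theorems~\ref{thm:Shepreformul}, \ref{TFAE} and~\ref{IEO_Claim}. You merely spell out the details (the integer-part case split and the $l=1$ instantiation of $\eIOpen$) that the paper leaves implicit.
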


\section{Questions}
The obvious similarity of Theorems \ref{thm:Shepreformul}, \ref{thm:Pres}, \ref{TFAE} (and \ref{IEO_Claim}) suggests that there may be a common generalization. We state this as a rather vague
\begin{oq}
Is there a truly general Shepherdson's theorem, i.e. a theorem such that Theorems \ref{thm:Shepreformul}, \ref{thm:Pres}, \ref{TFAE} (and \ref{IEO_Claim}) are its special cases?
\end{oq}

The theory $\IOpen$ is the strongest fragment of $\PA$ that we have a Shepherdson's type theorem for. Recently, however, Shepherdson's original ideas were generalized by Ko{\l{}}odziejczyk \cite{Kol11} and used to construct interesting models of the theory $T^0_2$ (an arithmetic with sharply bounded induction -- see \cite[Section 2]{Kol11} for the precise definition) that is stronger than $\IOpen$. Therefore it seems interesting to ask:
\begin{oq}
Are there variants of the Shepherdson's theorem for theories stronger than $\IOpen$? In particular is there such a variant for the theory $T^0_2$?
\end{oq}

%In case you use BibTeX uncomment following lines

\bibliography{bibliography}{}
\bibliographystyle{amsalpha}

%\section*{Current addresses}
%
%\vspace{1.2ex}
%\usebox{\authors}

\end{document}